\author{Thomas John Baird}
\title{ Moduli of flat SU(3)-bundles over a Klein bottle}
\newtheorem{thm}{Theorem}[section]
\newtheorem{cor}[thm]{Corollary}
\newtheorem{lem}[thm]{Lemma}
\newtheorem{prop}[thm]{Proposition}
\theoremstyle{definition}
\newtheorem{example}{Example}
\newcommand{\ignore}[1]{}
\newcommand{\id}{\mathbbmss{1}}
\newcommand{\Z}{\mathbb{Z}}
\newcommand{\R}{\mathbb{R}}
\newcommand{\C}{\mathbb{C}}
\newcommand{\Q}{\mathbb{Q}}
\newcommand{\gau}{\mathcal{G}}
\newcommand{\A}{\mathcal{A}}
\begin{document}


\maketitle

\begin{abstract}
In this short note, we compute the Betti numbers of the moduli stack of flat $SU(3)$-bundles over a Klein bottle. We also handle the general compact group case over $\R P^2$. In all cases the cohomology is found to be equivariantly formal, supporting a conjecture from the author's doctoral thesis. Our results also verify conjectural formulas obtained by Ho-Liu using Yang-Mills Morse theory.
\end{abstract}

\section{Introduction}

Let $M$ be a smooth, compact nonorientable 2-manifold. For a choice of Lie group $G$, one may study the set of group homomorphisms $Hom( \pi_1(M), G)$, topologized using the compact open topology. The group $G$ acts continuously on $Hom(\pi_1(M),G)$ by conjugation, and we deem two homomorphisms equivalent if they lie in the same orbit. The quotient stack $[Hom(\pi_1(M),G)/G]$ is well known to be isomorphic to the topological moduli stack of flat $G$-bundles over $M$ (see \cite{freed2007com} for example).

In previous work \cite{baird2008msf}, the author computed the stack cohomology $$H_{stack}^*([Hom(\pi_1(M),SU(2))/SU(2)];\Q) = H_{SU(2)}^*(Hom(\pi_1(M),SU(2));\Q)$$in the rank 2 case, where the conjugation action was found to be equivariantly formal (Recall that a $G$-space $X$ is called equivariantly formal if $H_G^*(X) \cong H^*(X) \otimes H^*(BG)$ as graded vector spaces). This motivated the conjecture that equivariant formality remains true when $SU(2)$ is replaced by any compact connected Lie group $G$.

In this paper we compute rational Betti numbers and establish the conjecture in the case that $G =SU(3)$ and $M$ is the Klein bottle. We also establish the conjecture for $M = \R P^2$ and arbitrary compact, connected $G$.



The case of $G=SU(3)$ has already been studied by Ho-Liu \cite{ho2008msf}, using Yang-Mills Morse theory. The Yang-Mills functional determines a Morse stratification of the space $\A$ of connections, whose semistable stratum deformation retracts onto the space $\A_{flat}$ of flat connections. Ho and Liu prove that the normal bundles of the Morse strata have vanishing equivariant Euler classes and introduce the term \emph{locally antiperfect} to describe this property. They conjecture that the stratification is also globally \emph{antiperfect} and produce under this assumption candidate Poincar\'e series for the moduli stacks. For a $G$-space $X$, the Poincar\'e series $P_t^G(X)$ is defined by  $$P_t^G(X) = \sum_{i=0}^{\infty} \dim(H^i_G(X;\Q)) t^i.$$ Let $\Sigma_n$ denoted the connected sum of $n+1$ copies of $\R P^2$. Ho-Liu propose that

\begin{equation}\label{confirm}
P_t^{SU(3)}( Hom(\pi_1( \Sigma_n), SU(3))) \stackrel{!}{=} \frac{P_t(SU(3))^n + (1+t^2+t^4)(t^3 + 2t^4 +t^5)^n}{(1-t^4)(1-t^6)}
\end{equation}
In this paper we confirm (\ref{confirm}) in the cases $n=0,1$. 

\emph{Notational convention}: Unless otherwise stated, cohomology is singular with $\Q$ coefficients.

\section{Equivariant formality}

We review the notion of equivariant formality.

Let $G$ be a compact, connected Lie group with universal $G$-bundle $G \rightarrow EG \rightarrow BG$ and let $X$ denote a compact, Hausdorff $G$-space. The equivariant cohomology of a $G$-space $X$ is defined to be the cohomology of the associated fibre bundle $X \rightarrow EG\times_G X \rightarrow BG$, $$H_G^*(X;\Q) = H^*(EG \times_G X;\Q).$$ The Serre spectral sequence $(E_*^{p,q})$ of the fibration $EG \times_G X$ satisfies $E_2^{p,q} \cong H^p(X) \otimes H^q(BG)$ and converges to $H_G^*(X)$. If the spectral sequence collapses at page 2, then $H_G^*(X) \cong H^*(X)\otimes H^*(BG)$ as graded vector spaces and we say the $G$-space $X$ is equivariantly formal. The cohomology ring $H^*(BG)$ is a polynomial ring with even degree generators, so $H^{odd}(BG) = 0$. Consequently, if $H^{odd}(X)=0$ then $X$ must be equivariantly formal.

Let $T \subset G$ denote a maximal torus in $G$. The $G$-space $X$ is $G$-equivariantly formal if and only if it is $T$-equivariantly formal under the restricted $T$-action. If $X^T$ denotes the $T$-fixed point set, by \cite{bor} IV 5.5 we have 
\begin{equation}\label{brosnan}
\dim H^*(X) \geq \dim H^*(X^T)
\end{equation}
with equality if and only if $X$ is equivariantly formal. 

\section{The projective plane}

The projective plane has fundamental group $\pi_1(\R P^2) \cong \Z/2\Z$. It follows that $ Hom(\pi_1(\R P^2), G) \cong \{ g \in G| g^2 = \id\}$ is exactly the set of square roots of the identity element. 

\begin{prop}
Let $G$ denote a compact, connected Lie group. The $G$-space $Hom(\pi_1(\R P^2), G)$ is equivariantly formal.
\end{prop}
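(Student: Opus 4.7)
The plan is to show that $H^{\mathrm{odd}}(X;\Q) = 0$ for $X := \{g \in G \mid g^2 = \id\}$, which by the observation in Section 2 (that $H^{\mathrm{odd}}(BG) = 0$ forces degeneration when the base has only even cohomology) then automatically upgrades to equivariant formality. Since $X$ is $G$-invariant under conjugation, it decomposes as a disjoint union of conjugacy classes $C_g = G/K_g$ where $K_g = Z_G(g)$, so it suffices to verify that $H^*(G/K_g;\Q)$ is concentrated in even degrees for each involution $g$.

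First I would observe that every involution $g \in G$ lies in some maximal torus $T \subseteq G$, since every element of a compact connected Lie group does. Then $T \subseteq K_g$, and because $T$ is already maximal in $G$ it remains maximal in $K_g$, so $K_g$ is a closed subgroup of maximal rank in $G$.

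The key input is the classical fact that $H^*(G/K;\Q)$ is concentrated in even degrees whenever $K$ is a closed subgroup of $G$ of maximal rank. For $K$ connected this follows from Borel's theorem (that $H^*(G/T;\Q)$ is concentrated in even degrees) combined with the Leray-Hirsch splitting $H^*(G/T;\Q) \cong H^*(G/K;\Q) \otimes H^*(K/T;\Q)$ associated to the fibration $K/T \to G/T \to G/K$. For $K$ possibly disconnected, let $K^0$ be the identity component and $F = K/K^0$; the finite covering $G/K^0 \to G/K$ identifies $H^*(G/K;\Q)$ with the $F$-invariants in $H^*(G/K^0;\Q)$, which of course remain concentrated in even degrees.

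Summing over conjugacy classes yields $H^{\mathrm{odd}}(X;\Q) = 0$ and hence equivariant formality. The point that requires most attention is the disconnected case $K_g^0 \neq K_g$, which already occurs for $G = SO(3)$ with $g = \mathrm{diag}(-1,-1,1)$ (where $K_g \cong O(2)$); the passage to the finite covering $G/K_g^0$ handles this cleanly, and makes no use of $g$ being central or of $G$ being simply connected.
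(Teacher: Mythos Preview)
Your proof is correct and follows essentially the same route as the paper: decompose $X$ into conjugacy classes $G/K_g$, observe that each $K_g$ has full rank because it contains a maximal torus, and invoke the classical vanishing of $H^{\mathrm{odd}}(G/K;\Q)$ for full-rank $K$ to conclude equivariant formality. Your version is slightly more explicit in that you spell out the Borel/Leray--Hirsch argument and the passage to $K_g^0$ in the disconnected case (with the helpful $SO(3)$ example), whereas the paper simply cites \cite{greub1972cca} for the vanishing and notes finiteness of the involution set in $T$.
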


\begin{proof}
The set of square roots of $\id \in G$ is clearly invariant under conjugation. Every conjugacy class intersects a maximal torus $T \subset G$ and the number of roots of $\id$ in $T$ is finite, equal to $2^{\text{rank}(T)}$. So $Hom(\pi_1(\R P^2), G)$ is a finite union of homogeneous spaces of the form $G/H$ where $\text{rank}(H) =\text{rank}(G)$. Such $G$-spaces are equivariantly formal because $H^{odd}(G/H) = 0$ (see \cite{greub1972cca} pg. 467).
\end{proof}

\begin{example}
The space $Hom(\pi_1(\R P^2), U(n))$ is isomorphic to $ \{ A \in U(n)| A^2 = \id\}$. Since $A^2 = \id$ implies that the eigenvalues of $A$ are $\pm 1$, by choosing (say) the +1 eigenspace, we obtain an isomorphism to a union of Grassmanians: $$Hom(\pi_1(\R P^2), U(n)) \cong \coprod_{k =0}^n Gr_k(\C^n) $$ similarly
$$Hom(\pi_1(\R P^2), SU(n)) \cong \coprod_{\text{k even}} Gr_k(\C^n) $$
In particular, $$P_t(Hom(\pi_1(\R P^2), SU(3))) = 1 +(1+t^2 + t^4). $$ confirming (\ref{confirm}) when $n=0$.
\end{example}

\section{The Klein bottle}

In this section we consider the moduli stack of rank 3 bundles over a Klein bottle, $Kl$. It will be useful to pass between structure groups $U(3)$ and $SU(3)$ using the following Lemma:

\begin{prop}\label{us}
We have an equation of Poincar\'e polynomials $$P_t( Hom_{id}(\pi_1(Kl),U(3))) = (1+t)P_t(Hom(\pi_1(Kl), SU(3))),$$ where $Hom_{id}(\pi_1(Kl),U(3))$ denotes the connected component of $Hom(\pi_1(Kl),U(3))$ containing the trivial homomorphism.
\end{prop}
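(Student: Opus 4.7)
My plan exploits the scalar multiplication action of the center $U(1) = Z(U(3))$. Using the presentation $\pi_1(Kl) = \langle a, b \mid aba^{-1}b \rangle$ and identifying each representation with the pair $(A, B) = (\rho(a), \rho(b))$ satisfying $ABA^{-1}B = \id$, I would first define the scalar multiplication map
\[
\phi \co U(1) \times Hom(\pi_1(Kl), SU(3)) \longrightarrow Hom(\pi_1(Kl), U(3)), \qquad (z, A, B) \longmapsto (zA, B).
\]
Centrality of $z$ preserves the relation, and taking determinants of the relation gives $(\det B)^2 = 1$, so the identity component of $Hom(\pi_1(Kl), U(3))$ is characterized by $\det B = 1$ with arbitrary $\det A \in U(1)$. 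Granting that $Hom(\pi_1(Kl), SU(3))$ is connected, the image of $\phi$ is the connected subset cut out by $B \in SU(3)$ and $ABA^{-1}B = \id$, which coincides with $Hom_{id}(\pi_1(Kl), U(3))$; surjectivity follows by extracting a cube root of $\det A'$ from a target point $(A', B)$. A fiber check then shows $\phi$ is a $3$-fold covering whose deck group is $\mu_3 = Z(SU(3))$ acting freely by $\omega \cdot (z, A, B) = (\omega z, \omega^{-1} A, B)$.

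Next I would pass to rational cohomology. Because $\mu_3$ is finite and acts freely,
\[
H^*(Hom_{id}(\pi_1(Kl), U(3)); \Q) \cong \bigl(H^*(U(1); \Q) \otimes H^*(Hom(\pi_1(Kl), SU(3)); \Q)\bigr)^{\mu_3}.
\]
Since multiplication by $\omega$ on the connected group $U(1)$ is homotopic to the identity, $\mu_3$ acts trivially on $H^*(U(1); \Q)$, and the invariants split as $H^*(U(1); \Q) \otimes H^*(Hom(\pi_1(Kl), SU(3)); \Q)^{\mu_3}$. The Poincar\'e polynomial identity of the proposition therefore reduces to the statement that $\mu_3$ acts trivially on $H^*(Hom(\pi_1(Kl), SU(3)); \Q)$.

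This final triviality is where I expect the main work. The residual action is the character twist $(A, B) \mapsto (\omega^{-1} A, B)$ coming from the nontrivial element of $Hom(\pi_1(Kl), Z(SU(3))) \cong \mu_3$. The naive attempt --- isotoping $\omega^{-1}$ to $\id$ through a path $\gamma(t) \subset SU(3)$ and letting $\gamma(t)$ act by left multiplication on $A$ --- fails because preserving $ABA^{-1}B = \id$ requires $\gamma(t)$ to centralize $B$, and the only globally central paths sit inside the discrete group $Z(SU(3))$. Since $SU(3)$ is simply connected, each centralizer $C_{SU(3)}(B)$ is nonetheless connected and contains $Z(SU(3))$, so a path from $\id$ to $\omega^{-1}$ exists pointwise inside each $C_{SU(3)}(B)$. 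Upgrading these to a continuous homotopy would likely proceed via a stratification of $Hom(\pi_1(Kl), SU(3))$ according to the conjugacy class of $B$, on each stratum building a smooth choice of path using the local triviality of the centralizer fibration, and patching across strata by standard extension arguments.
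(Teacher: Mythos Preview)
Your covering-space setup is correct and, after translating between the two standard presentations of $\pi_1(Kl)$, coincides with the paper's: both produce a $\mu_3$-Galois cover $U(1)\times Z \to Hom_{id}(\pi_1(Kl),U(3))$ (with $Z=Hom(\pi_1(Kl),SU(3))$) and reduce the proposition to showing that the residual $\mu_3$-action on $H^*(Z;\Q)$ is trivial.

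The gap is that you do not prove this triviality. Your sketched strategy --- stratify $Z$ by the conjugacy type of $B$, choose on each stratum a path from $\id$ to $\omega^{-1}$ inside $C_{SU(3)}(B)$, then patch --- is not carried out, and making it work globally is genuinely delicate: the centralizer jumps in dimension along the non-generic strata, so a continuous choice of path must be produced across the closure relations, and it is not clear a priori that an honest isotopy of $Z$ exists. You flag this yourself as ``where I expect the main work,'' and indeed it is the entire content of the proposition.

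The paper avoids this difficulty by a different route. It filters $Z$ by the closed subspace $A\subset Z$ of pairs lying in a common maximal torus and analyzes the $\mu_3$-action on the long exact sequence of $(Z,A)$. On $H^*(A)$ the action is trivial because, via the model $G/T\times_W(T\times T_2)$, the $\mu_3$-action becomes translation on the $T$ factor, which \emph{is} an isotopy. On $H^*(Z,A)$ the action is trivial for the elementary reason that each group has rational dimension $\leq 1$ (this is Lemma~\ref{xmasgb}, proved by an explicit geometric description of $Z-A$), and a $\Z/3\Z$-representation on a rational vector space of dimension $\leq 1$ is necessarily trivial. Triviality on $H^*(Z)$ then follows from the long exact sequence. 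In short, the paper replaces your hoped-for global isotopy by a decomposition into two pieces: one admitting an explicit isotopy, the other handled by a dimension count.
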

We postpone the proof of \ref{us} until the end of the section.

The fundamental group of the Klein bottle $Kl$ has a presentation $\pi_1(Kl) \cong \{ a_0, a_1| a_0^2 = a_1^2\}$. It follows that $$Hom(\pi_1(Kl), G) \cong \{ (g_0,g_1) \in G^2| g_0^2=g_1^2\}.$$
For the rest of this section, set $G := SU(3)$ and $Z := Hom(\pi_1(Kl), G)$. The main result of this paper is
\begin{thm}\label{laugh}
The space $Hom(\pi_1(Kl), SU(3))$ has Poincar\'e polynomial $$ P_t(Hom(\pi_1(Kl), SU(3))) = (1+t^3)(1+t^5) + (1+t^2+t^4)(t^3+2t^4+t^5).$$ 
\end{thm}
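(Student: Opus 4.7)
The plan is to convert the defining relation $g_0^2 = g_1^2$ into a twisted-conjugation condition, stratify the resulting space by conjugacy type, and sum the contributions from each stratum. Setting $u = g_0 g_1^{-1}$ and $v = g_1$ so that $g_0 = uv$, the relation $g_0^2 = g_1^2$ becomes $uvuv = v^2$, equivalently $v^{-1} u v = u^{-1}$. Thus $Z$ is homeomorphic to $W := \{(u,v) \in G \times G : v^{-1} u v = u^{-1}\}$, and the first projection $W \to G$ has image the set $G_r$ of elements conjugate to their inverses, with fibre over $u$ a torsor for the centralizer $C_G(u)$.

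For $G = SU(3)$, I would characterize $G_r$ explicitly as $\{u \in SU(3): 1 \in \mathrm{spec}(u)\}$, by observing that any inversion-invariant triple of unit-modulus scalars with unit product must be of the form $\{1, \lambda, \bar\lambda\}$. The set $G_r$ then stratifies into three pieces: the identity $\{I\}$; the conjugacy class $\mathcal{O}_2 \cong \C P^2$ of non-central involutions with eigenvalues $(1,-1,-1)$; and a regular open part $G_r^\circ$ consisting of elements with eigenvalues $(1, \lambda, \bar\lambda)$ for $\lambda \in S^1 \setminus \{\pm 1\}$. Correspondingly $W$ stratifies into: the copy $\{I\} \times G \cong SU(3)$ contributing $P_t(SU(3)) = (1+t^3)(1+t^5)$; a bundle $W_2 \to \mathcal{O}_2$ with fibre $C_G(u) \cong U(2)$ (noting that for involutions the condition $v^{-1} u v = u^{-1} = u$ reduces to ordinary commutation); and a regular stratum $W^\circ \to G_r^\circ$ with torus-coset fibres.

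Next I would compute the Poincar\'e polynomial of each stratum individually, in particular identifying the $U(2)$-bundle $W_2 \to \C P^2$ as the associated bundle of unitary automorphisms of the tautological orthogonal-complement rank-$2$ bundle, and evaluating its cohomology via the Serre spectral sequence. A useful sanity check is that the $T$-fixed locus $Z^T$ consists of four copies of the maximal torus $T$, one for each $s \in T$ with $s^2 = I$, giving total Betti number $4\cdot(1+1)^2 = 16$; this matches the total dimension of the claimed polynomial and is consistent with the expected equivariant formality established elsewhere in the paper.

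Finally, I would assemble the strata using the long exact sequences of the pairs given by the stratification, or equivalently a Leray spectral sequence for the projection $\pi_1\co W \to G_r$. The main obstacle will be the middle and regular strata: identifying their bundle structures precisely and summing their Poincar\'e polynomials to reproduce the second summand $(1+t^2+t^4)(t^3+2t^4+t^5)$ in the claimed formula, which requires understanding the non-triviality of the relevant bundles and careful handling of boundary maps in the gluing. An alternative route suggested by Proposition \ref{us} would be to compute $P_t(Hom_{id}(\pi_1(Kl), U(3)))$ first, where the cleaner structure of $U(3)$'s conjugacy classes may simplify the bookkeeping, and then divide by $(1+t)$ to recover $P_t(Z)$.
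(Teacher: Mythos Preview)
Your change of variables is elegant and, once unpacked, yields exactly the paper's decomposition. Under $(g_0,g_1)\mapsto(u,v)=(g_0g_1^{-1},g_1)$, the commuting locus $A\subset Z$ (pairs lying in a common torus) becomes $\{(u,v):u^2=I,\ v\in C_G(u)\}$, which is precisely your $W_0\sqcup W_2$; your regular stratum $W^\circ$ is the paper's $Z-A$. Your description of $W_2$ as a $U(2)$-bundle over $\C P^2$ is a pleasant repackaging of the paper's model $G/T\times_{\Z_2}T$ for the second component of $A$, and your torus-torsor picture for $W^\circ$ is essentially the flag-variety analysis in the paper's Lemma~\ref{xmasgb}. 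So the geometry you propose is correct and equivalent to what the paper does.

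The gap is exactly where you flag the ``main obstacle'': the boundary maps. Knowing $P_t(A)$ and even the exact $P_t(Z-A)$ does not determine $P_t(Z)$; in the long exact sequence of $(Z,A)$ each $H^k(Z,A)$ is at most one-dimensional, and one must decide for each $k$ whether $\partial\co H^{k-1}(A)\to H^k(Z,A)$ is zero or not. The paper resolves this with an external input you do not mention: Yang--Mills Morse theory (Lemma~\ref{xmasday}) gives $P_t(Z)\equiv 1+2t^3\pmod{t^4}$, forcing the four low-degree boundary maps to be nonzero; the fixed-point inequality $\dim H^*(Z)\ge\dim H^*(Z^T)=16$ then forces the four high-degree boundary maps to vanish. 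Your fixed-point count is therefore not merely a sanity check but a genuine ingredient --- yet by itself it only bounds the \emph{total} rank of the boundary maps, not their placement, and you cannot assume equivariant formality since that is deduced from Theorem~\ref{laugh}, not before it. Without the Yang--Mills input (or some substitute computing $H^{\le 3}(Z)$ directly), your plan cannot pin down which differentials fire, and the alternative route through $U(3)$ faces the same obstruction.
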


\begin{cor}
The conjugation action of $SU(3)$ on $Hom(\pi_1(Kl), SU(3))$ is equivariantly formal. Consequently 
\begin{equation}\label{addedform}
P_t^{SU(3)}(Hom(\pi_1(Kl), SU(3))) = \frac{(1+t^3)(1+t^5) + (1+t^2+t^4)(t^3+2t^4+t^5)}{(1-t^4)(1-t^6)}
\end{equation}
in accordance with $(\ref{confirm})$. 	
\end{cor}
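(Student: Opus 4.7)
My plan is to invoke the criterion (\ref{brosnan}) from Section 2: the $SU(3)$-space $X := Hom(\pi_1(Kl), SU(3))$ is equivariantly formal if and only if $\dim H^*(X) = \dim H^*(X^T)$, where $T \subset SU(3)$ is a maximal torus. Theorem \ref{laugh} supplies $\dim H^*(X)$ by evaluating its Poincar\'e polynomial at $t=1$, giving $(1+1)(1+1) + (1+1+1)(1+2+1) = 4+12 = 16$. It therefore suffices to identify the fixed point set $X^T$ and check that its total Betti number is also $16$.

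To compute the fixed points, I would use that $X = \{(g_0,g_1) \in G^2 \mid g_0^2 = g_1^2\}$ carries the diagonal conjugation action, so $X^T$ consists of pairs $(t_0,t_1) \in T^2$ with $t_0^2 = t_1^2$ — here I use the standard fact that the centralizer of a maximal torus in a compact connected Lie group is the torus itself, so the $T$-fixed points of $G$ under conjugation are exactly $T$. In the abelian group $T$, the equation $t_0^2 = t_1^2$ is equivalent to $t_0 t_1^{-1}$ lying in the $2$-torsion subgroup $T[2]$, and the bijection $(t_0,t_1) \mapsto (t_1, t_0 t_1^{-1})$ identifies $X^T$ homeomorphically with $T \times T[2]$. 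Since $T \cong (S^1)^2$ and $|T[2]| = 4$, the fixed set $X^T$ is a disjoint union of four $2$-tori with total Betti number $4 \cdot 4 = 16$, matching the count from Theorem \ref{laugh}.

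Once equivariant formality is in hand, the second assertion is automatic: the graded isomorphism $H_G^*(X) \cong H^*(X) \otimes H^*(BG)$ yields $P_t^{SU(3)}(X) = P_t(X) \cdot P_t(BSU(3))$, and since $H^*(BSU(3)) = \Q[c_2,c_3]$ with generators in degrees $4$ and $6$, we have $P_t(BSU(3)) = 1/((1-t^4)(1-t^6))$; combining with Theorem \ref{laugh} reproduces (\ref{addedform}). I expect no real obstacle here — the argument is short because Theorem \ref{laugh} has already done the hard work. The only point requiring a bit of care is the description of $X^T$, where one must not overlook components; maximality of $T$ makes this clean, and the count $\dim H^*(X^T) = 16$ is precisely what confirms equivariant formality rather than a strict inequality in (\ref{brosnan}).
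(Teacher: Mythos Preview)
Your proof is correct and follows essentially the same route as the paper's: both identify $X^T$ with $T \times T_2$ (you via the change of variables $(t_0,t_1)\mapsto (t_1,t_0t_1^{-1})$, the paper via factoring through the abelianization $H_1(Kl;\Z)\cong \Z\oplus\Z/2\Z$), verify the total Betti number is $16$, invoke the equality case of (\ref{brosnan}), and then multiply by $P_t(BSU(3))$.
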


\begin{proof}
Let $T\subset G=SU(3)$ be a maximal torus. The $T$-fixed point set is $Hom(\pi_1(Kl), G)^T = Hom(\pi_1(Kl), T)$. Any such homomorphism must pass through the abelianization $Ab(\pi_1(Kl)) \cong H_1(Kl;\Z) \cong \Z \oplus \Z/2\Z$, so $$Hom(\pi_1(Kl), G)^T  \cong T \times T_2,$$ where $T_2 \cong \Z/2\Z \oplus \Z/2\Z$ is the 2-torsion subgroup of $T$. In particular $$\dim(Hom(\pi_1(Kl), G)^T) = 16= \dim(Hom(\pi_1(Kl), G))$$ which by (\ref{brosnan}) proves equivariant formality. Thus $P_t^{SU(3)}(Hom(\pi_1(Kl), SU(3))) = P_t(Hom(\pi_1(Kl), SU(3)))P_t(BSU(3))$, giving rise to (\ref{addedform}). 
\end{proof}

We will compute the Poincar\'e polynomial for $Z$ using the long exact sequence of a pair. Let $A \subset Z$ denote the subset of pairs $(k_0,k_1)$ such that $k_0$ and $k_1$ lie in some common maximal torus. The subspace $A$ is preserved by the $G$-action, and the stabilizers of points in $A$ all contain maximal tori of $G$. Applying (\cite{b}, Theorem 3.3) we produce a $G$-equivariant cohomology isomorphism 
\begin{equation}\label{realfirst}
\phi: G/T \times_{W} Z^T \rightarrow A.
\end{equation}
Here $T \subset G$ is a maximal torus, $G/T$ is a coset space, $Z^T$ is the set of $T$-fixed points, $W = N(T)/T$ is the Weyl group and $G/T \times_{W} Z^T$ is the orbit space of the product $G/T \times Z^T$ under the natural diagonal $W$ action. The map $\phi$ sends $\phi( gT, z) = g \cdot z$.

\begin{lem}
Let $ W = N(T)/T$ act on $T_2 := \{ t \in T| t^2 = \id\}$, $T$ and $G/T$ in the usual way. There is a cohomology isomorphism 
\begin{equation}\label{firstequ}
H^*(A;\Q) \cong H^*(G/T \times_W (T \times T_2);\Q)
\end{equation} 
with Poincar\'e polynomial 
\begin{equation}\label{oxfordtyep}
P_t( A) = (1+t^3)(1+t^5) + (1+t^2+t^4)(1+t)(1+t^3)
\end{equation}
\end{lem}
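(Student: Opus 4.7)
The cohomology isomorphism (\ref{firstequ}) is essentially immediate: equation (\ref{realfirst}) supplies a rational cohomology isomorphism $G/T \times_W Z^T \to A$, and the argument used in the proof of the Corollary identifies $Z^T = \Hom(\pi_1(Kl), T)$ with $T \times T_2$ via the abelianization $H_1(Kl;\Z) \cong \Z \oplus \Z/2\Z$, compatibly with the natural $W$-actions. Substituting produces (\ref{firstequ}).

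For the Poincar\'e polynomial, I would decompose $T_2$ into $W$-orbits. For $G = SU(3)$, the four elements of $T_2$ are $\id$ and the three diagonal matrices having exactly two entries equal to $-1$. The Weyl group $W = S_3$ fixes $\id$ and acts transitively on the remaining three with point-stabilizer a reflection subgroup $\cong \Z/2\Z$. This produces a disjoint-union decomposition
$$G/T \times_W (T \times T_2) \;\cong\; (G/T \times_W T) \;\sqcup\; (G/T \times_{\Z/2\Z} T),$$
so $P_t(A)$ is the sum of the Poincar\'e polynomials of the two pieces. Since rational cohomology of a finite quotient is computed by invariants, each piece reduces to a Weyl-character computation on $H^*(G/T) \otimes H^*(T)$.

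For the first summand I would invoke the classical rational identification $(H^*(G/T) \otimes H^*(T))^W \cong H^*(G;\Q)$ due to Chevalley, giving $P_t(G/T \times_W T) = P_t(SU(3)) = (1+t^3)(1+t^5)$. For the second summand I would apply the averaging formula $\dim H^k(X)^{\Z/2\Z} = \tfrac{1}{2}\bigl(\dim H^k(X) + \mathrm{tr}(\sigma\mid H^k(X))\bigr)$ to $X = G/T \times T$ with $\sigma$ a transposition. The input is the graded $S_3$-character of $H^*(G/T)$ (the coinvariant algebra: trivial in degree $0$, the $2$-dimensional reflection representation $V$ in degrees $2$ and $4$, and the sign representation in degree $6$) and of $H^*(T) = \Lambda^* \lie{t}^*$ (trivial, $V$, sign in degrees $0,1,2$ respectively). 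At a transposition these give graded characters $1 - t^6$ and $1 - t^2$; averaging $(1 + 2t^2 + 2t^4 + t^6)(1+t)^2$ against $(1-t^6)(1-t^2)$ yields $(1+t^2+t^4)(1+t)(1+t^3)$, matching (\ref{oxfordtyep}).

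The main obstacle is the second summand: the computation hinges on correctly identifying the $S_3$-representation structure on the graded pieces of $H^*(G/T)$ and $H^*(T)$---in particular, recognizing that the top Schubert class of $G/T$ transforms by the sign character and that $H^1(T) \cong \lie{t}^*$ carries the reflection representation rather than a sum of trivial characters. Once these character data are in hand, the remaining manipulations are mechanical polynomial algebra.
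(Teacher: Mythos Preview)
Your proposal is correct and follows essentially the same route as the paper: identify $Z^T \cong T \times T_2$, split $T_2$ into the two $W$-orbits to get $G/T \times_W T$ and $G/T \times_{\Z/2} T$, and use the Chevalley isomorphism for the first piece. For the second piece the paper computes the $\Z/2$-invariants by recording $P_t((G/T)/\Z_2) = P_t(\C P^2) = 1+t^2+t^4$ and $P_t(T/\Z_2) = 1+t$ and combining the $\pm$-eigenspaces, which is exactly your character-averaging formula in geometric dress.
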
   

\begin{proof}
The $T$-fixed point set $Z^T$ consists of pairs $(g_0,g_1) \in T^2$ such that $t_0^2 = t_1^2$ which is equivalent to $t_0t_1 \in T_2$, so $Z^T \cong T \times T_2$. Equation (\ref{firstequ}) then follows from (\ref{realfirst}). 

Choose the maximal torus to be the diagonal matrices in $SU(3)$. Then $T_2$ consists of the four matrices with diagonal entries $\{ (1,1,1), (-1,1,1), (1,-1,1), (1,1,-1)\}$. The Weyl group $W \cong S_3$ acts on $T_2$ by fixing the identity and permuting the remaining three element. It follows that $G/T \times_W (T \times T_2)$ has two components. The first component $G/T \times_W T$ has cohomology isomorphic to $SU(3)$ and contributes the first term of (\ref{oxfordtyep}). The second component is $G/T \times_{\Z_2}T$ where the $\Z_2$ action is generated by any of the order two elements in $W$. The Poincar\'e polynomials we need to know are $P_t(G/T) = (1+2t^2 +2t^4+t^6),$ $P_t( (G/T)/\Z_2) = P_t( \C P^2) = 1+t^2+t^4$, $P_t(T) = (1+t)^2$ and $P_t(T/\Z_2) = 1+t$. The result follows by a simple calculation.
\end{proof}

Equation (\ref{firstequ}) provides a description of $A$ that can be generalized to any compact Lie group $G$. The more complicated part is describing the complement $Z - A$.

\begin{lem}\label{xmasgb}
The Poincar\'e polynomial of the pair $(Z, A)$ satisfies 
\begin{equation}\label{inoct}
P_t(Z,A) \leq (1+t)(t^1+t^3+t^6+t^8),
\end{equation}
where the notation $p(t) \leq q(t)$ means that $q(t) - p(t)$ has no negative coefficients. 
\end{lem}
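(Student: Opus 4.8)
The plan is to compute $H^*(Z,A)$ as compactly supported cohomology of the open complement. Since $Z$ is compact, $A\subseteq Z$ is closed, and $(Z,A)$ is a CW pair, $H^*(Z,A)\cong \tilde H^*(Z/A)\cong H^*_c(Z\setminus A)$, so it suffices to bound the Poincar\'e polynomial of $H^*_c(Z\setminus A)$. First I would stratify $Z\setminus A$ by means of the squaring map. For $(g_0,g_1)\in Z$ the element $s:=g_0^2=g_1^2$ commutes with both $g_0$ and $g_1$; when $s$ is regular, $g_0$ and $g_1$ lie in the maximal torus $C_G(s)$, forcing $(g_0,g_1)\in A$. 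Hence on $Z\setminus A$ the element $s$ always has a repeated eigenvalue, and sending $(g_0,g_1)$ to that eigenvalue defines a continuous map $\pi\co Z\setminus A\to S^1$ whose only singular values are the three cube roots of unity $\mu_3\subseteq S^1$ (where $s$ is central). Set $W_0:=\pi^{-1}(\mu_3)$, a closed subset, and $W_1:=\pi^{-1}(S^1\setminus\mu_3)$, its open complement.

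The next task is to describe $W_0$ and $W_1$. Over a point of $\mu_3$ the relevant square roots of the (central) value of $s$ are the involutions with eigenvalues $(1,-1,-1)$, each recorded by an eigenline, and two of them commute precisely when their lines coincide or are orthogonal; so the fibre of $\pi$ there is $V:=(\C P^2\times\C P^2)\setminus(\Delta\cup Fl)$, where $\Delta$ is the diagonal and $Fl=\{(L_0,L_1):L_0\perp L_1\}$ is the (disjoint) incidence variety, and therefore $W_0\cong V\sqcup V\sqcup V$. Over the three arcs of $S^1\setminus\mu_3$ the square roots of a subregular $s$ are forced into block form, and a short computation identifies $W_1\cong(S^1\setminus\mu_3)\times\tilde F$, where $\tilde F=\mathcal E\setminus(\Delta_{\mathcal E}\cup\Delta'_{\mathcal E})$ with $\mathcal E=\mathbb P(\gamma^\perp)\times_{\C P^2}\mathbb P(\gamma^\perp)$ the fibre product of two copies of the projectivised orthocomplement of the tautological line $\gamma$ on $\C P^2$, and $\Delta_{\mathcal E},\Delta'_{\mathcal E}$ the fibrewise diagonal and fibrewise antidiagonal. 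Using the long exact sequences of the decompositions $(\C P^2)^2=V\sqcup(\Delta\sqcup Fl)$ and $\mathcal E=\tilde F\sqcup(\Delta_{\mathcal E}\sqcup\Delta'_{\mathcal E})$ — which reduce to the restriction maps $H^*((\C P^2)^2)\to H^*(\Delta)\oplus H^*(Fl)$ and $H^*(\mathcal E)\to H^*(Fl)^{\oplus 2}$, computable from the projective-bundle presentation together with the identity $a^*\xi=-b-\xi$ for the fibrewise antipodal involution — one finds $H^*_c(V)\cong H^*_c(\tilde F)\cong\Q$, concentrated in degrees $1,3,6,8$. Consequently $H^*_c(W_0)=\Q^3$ in degrees $1,3,6,8$, and $H^*_c(W_1)=\Q^3$ in degrees $2,4,7,9$.

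Feeding this into the long exact sequence $\cdots\to H^k_c(W_1)\to H^k_c(Z\setminus A)\to H^k_c(W_0)\xrightarrow{\partial_k}H^{k+1}_c(W_1)\to\cdots$, and using that the supports of $H^*_c(W_0)$ and $H^*_c(W_1)$ interleave, the sequence collapses to four maps $\partial_k\co\Q^3\to\Q^3$ with $k\in\{1,3,6,8\}$, and gives $$P_t(Z,A)=\sum_{k\in\{1,3,6,8\}}\bigl(3-\rk(\partial_k)\bigr)(t^k+t^{k+1}).$$ Since $(1+t)(t+t^3+t^6+t^8)=\sum_{k\in\{1,3,6,8\}}(t^k+t^{k+1})$, the Lemma is equivalent to the estimate $\rk(\partial_k)\geq 2$ for each $k\in\{1,3,6,8\}$.

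This rank estimate is the main obstacle. I would deduce it from the combinatorial shape of the non-regular locus: the three subregular arcs together with the three central points form a triangle $K_3$ (a triangulated circle) inside $G\setminus G_{reg}$. Because the connecting map is local near $W_0$, the component of $\partial_k$ from $H^k_c(V_p)$ (over a central point $p$) into $H^{k+1}_c$ of an arc-piece vanishes unless that arc is incident to $p$, and on an incident pair it is $H^1_c(\mathrm{arc})\otimes(\text{degeneration map }H^k_c(V_p)\to H^k_c(\tilde F))$, a map of one-dimensional spaces. Granting that these degeneration maps are nonzero — which one sees by tracking how the two walls $\Delta_{\mathcal E},\Delta'_{\mathcal E}$ of $\tilde F$ limit onto the two walls $\Delta,Fl$ of $V_p$ at each central point — $\partial_k$ is, up to signs and nonzero scalars, the simplicial coboundary $C^0\to C^1$ of $K_3$, whose kernel is the constants, so $\rk(\partial_k)=3-1=2$; and even without the isomorphism claim, any $\pm 1$-twist of this coboundary has rank at least $3-1=2$, which is all that is needed. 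The delicate points are therefore verifying the locality/vanishing pattern and the non-vanishing of the degeneration maps, for which one wants an explicit local model of $Z\setminus A$ near $W_0$.
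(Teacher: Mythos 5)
Your strategy is genuinely different from the paper's. The paper characterizes a point $(g,h)\in Z-A$ by the eigenvector data of $g,h$ and builds an explicit two-sheeted cover $\rho\colon Y\times U(1)\to Z-A$ with $Y\subset Fl_3\times Fl_3$; it then gets an upper bound on $H^*(Z-A)$ from the transfer-injection $H^*(Z-A)\hookrightarrow H^*(Y\times U(1))$ and computes $H^*(Y)$ via the Gysin sequence of the $\C^*$-bundle $Y\to Fl_3$ (namely $\mathrm{Hom}(L_1,L_2)$ minus its zero section), and finally invokes Poincar\'e duality on the $9$-manifold $Z-A$. You instead stratify $Z\setminus A$ directly by the repeated eigenvalue $\mu$ of $s=g_0^2$, producing a map to $S^1$ with singular fibres over $\mu_3$, and run the long exact sequence of compactly supported cohomology for $W_0\sqcup W_1$. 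Your description of the fibres is correct (the repeated eigenvalue exists on all of $Z\setminus A$; I also checked your claim that $H^*_c(V)\cong\Q$ in degrees $1,3,6,8$ for $V=(\C P^2)^2\setminus(\Delta\sqcup Fl)$), and after this the lemma does reduce, exactly as you say, to the estimate $\mathrm{rk}(\partial_k)\geq 2$ for the four connecting maps. The paper's route buys a shorter argument because the covering-space injection gives an upper bound for free, with no connecting map to control; your route requires you to actually establish a lower bound on a boundary map.

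That lower bound is where the proof has a real gap. You reduce the lemma to $\mathrm{rk}(\partial_k)\geq 2$ and then argue this holds ``granting that the degeneration maps are nonzero'' and ``granting'' the locality/vanishing pattern that makes $\partial_k$ a $\pm 1$-twisted simplicial coboundary of $K_3$. Neither claim is proved, and both require genuine work: the locality statement needs an excision argument near each $V_p$ to show the matrix entry from $V_p$ into an arc vanishes unless $p$ is an endpoint of that arc, and the non-vanishing of the degeneration map $H^k_c(V_p)\to H^k_c(\tilde F)$ needs a local model of how the boundary strata $\Delta_{\mathcal E},\Delta'_{\mathcal E}$ of $\tilde F$ degenerate onto $\Delta,Fl$ as $\mu\to\omega$ — precisely the ``explicit local model'' you say one would want but don't supply. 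Since the entire inequality rests on $\mathrm{rk}(\partial_k)\geq 2$ (if $\partial_k=0$ the bound fails by a wide margin), this is not a cosmetic omission; as written, the argument only proves $P_t(Z,A)\leq 3\sum_{k}(t^k+t^{k+1})$, which is too weak. I'd also flag, as a secondary point, that the companion claim $H^*_c(\tilde F)\cong\Q$ in degrees $1,3,6,8$ and the triviality of the bundle $W_1\to S^1\setminus\mu_3$ (which involves choosing a branch of $\sqrt{\mu}$ on each arc) are asserted but not verified; they are plausible, but they would need to be spelled out in a complete proof.
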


\begin{proof}
We will show that $Z-A$ is a $9$-dimensional orientable manifold, so by Poincar\'e duality $H^*(Z,A) \cong H^{(9-*)}(Z-A)$ and we can focus instead on $Z-A$. 

A pair $(g,h) \in Z-A$ is characterized by $g,h \in SU(3), gh \neq hg, g^2=h^2$. Since $g$ and $h$ both commute with $g^2$ but not with each other, we deduce that $g^2$ must have an eigenvalue $\lambda^2$ of multiplicity greater than one, and that $g$ and $h$ have eigenvalues $ \lambda, -\lambda, \lambda^{-2}$. It is easily verified that $g$, $h$ must share an eigenvector $\bar{x}$ such that $g \bar{x} = h \bar{x} = \lambda^{-2} \bar{x}$ and that up to scalar multiplication $\bar{x}$ is the only common eigenvector of $g$ and $h$.

Now let $P(\C^3)$ denote the set of Hermitian, rank 1 projection operators on $\C^3$. We define the flag variety $Fl_3 := \{ p=(p_1,p_2,p_3) \in P(\C^3)| p_i p_j = \delta_{i,j} p_i\}$ where $\delta_{i,j}$ denotes the Dirac delta. Define $\psi: Fl_3 \times U(1) \rightarrow SU(3)$ by $\psi(p, \lambda) = \lambda p_1 - \lambda p_2 + \lambda^2 p_3$.

Define $Y := \{ (p,q) \in Fl_3| p_3 = q_3 \text{ and } p_1q_1 \neq q_1p_1\}$. Then by the above discussion, the map $ \rho: Y \times U(1) \rightarrow Z-A$ defined by $\rho((p,q),\lambda) = (\psi(p,\lambda), \psi(q,\lambda))$ is a two sheeted cover. It follows that the induced map $H^*(Z-A) \hookrightarrow H^*( Y \times U(1))$ is injective. We can therefore obtain upperbounds of the Betti numbers of $Z-A$ by computing $H^*(Y)$.

Let $\pi: Y \rightarrow Fl_3$ denote the projection $\pi (p,q) = p$. An element $q$ in the fibre $\pi^{-1}(p)$ is uniquely determined by a choice $q_1 \in P(\C^3)$ with $q_1p_3 = 0$ and $q_1 \neq p_1,p_2$. Thus $\pi$ has fibres isomorphic to $ \C P^1 - 2 pts \cong \C^*$. In fact, if $L_1, L_2,L_3$ denote the tautological line bundles over $Fl_3$, it follows that $Y$ is isomorphic to $Hom(L_1, L_2)$ minus the zero section. We see that $Y \times U(1)$ is a $9$-dimensional orientable manifold. The Serre spectral sequence to the $\C^*$-bundle $Y \rightarrow Fl_3$ has $E_2$ page:
\\
\\
\sseqystep 1
\sseqentrysize= .8cm
\begin{sseq}{7}{2}
\ssdrop{1}
\ssmoveto 0 1
\ssdrop{1}
\ssmoveto 6 0
\ssdrop{1}
\ssmoveto 6 1
\ssdrop{1}
\ssmoveto 2 0
\ssdrop{2}
\ssmoveto 2 1
\ssdrop{2}
\ssmoveto 4 0
\ssdrop{2}
\ssmoveto 4 1
\ssdrop{2}
\ssmoveto 0 1
\ssarrow {2} {-1}
\ssmoveto 2 1
\ssarrow {2} {-1}
\ssmoveto 4 1
\ssarrow {2} {-1}
\end{sseq}

The boundary map from $E_2^{p,1} \cong H^p(Fl_3)$ to $E_2^{p+2,0} \cong H^{p+2}(Fl_3)$ is simply cup product by the first Chern class of $Hom(L_1, L_2)$. Here $H^*(Fl_3) \cong Q[x_1,x_2,x_3] / (\text{symmetric polynomials})$, $c_1(Hom(L_1,L_2)) = x_2-x_1$ and the $E_3$ page is:
\\
\\
\begin{sseq}{7}{2}
	\ssdrop{1}
	\ssmoveto 0 1
	\ssdrop{0}
	\ssmoveto 6 0
	\ssdrop{0}
	\ssmoveto 6 1
	\ssdrop{1}
	\ssmoveto 2 0
	\ssdrop{1}
	\ssmoveto 2 1
	\ssdrop{0}
	\ssmoveto 4 0
	\ssdrop{0}
	\ssmoveto 4 1
	\ssdrop{1}	
\end{sseq}

So $P_t(Y) = 1+t^2 +t^5+t^7$ so $P_t(Z-A) \leq (1+t^2+t^5+t^7)(1+t)$ by the Kunneth theorem, and Poincar\'e duality completes the proof. 
\end{proof}
In fact we will show in the course of proving Theorem \ref{laugh} that (\ref{inoct}) is an equality. 
We need one more Lemma for our calculation.

\begin{lem}\label{xmasday}
Modulo terms of order $4$ or higher, $$P_t(Z) = 1 + 2t^3 \text{  (mod } t^4).$$
\end{lem}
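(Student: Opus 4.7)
The plan is to extract the low-degree cohomology of $Z$ from the long exact sequence of the pair $(Z, A)$, combining the bound on $P_t(Z, A)$ from Lemma \ref{xmasgb} with the explicit formula for $P_t(A)$ established above.

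First I would show $Z$ is path-connected, giving $H^0(Z;\Q) = \Q$: both components $A_1 = \Delta(G)$ and $A_2 = G/T \times_{\Z_2} T$ of $A$ meet the closure of the $9$-dimensional open stratum $Z - A$. Using the covering $\rho \colon Y \times U(1) \to Z - A$ from Lemma \ref{xmasgb}, the limit of $\rho((p, q), \lambda)$ as $(p,q)$ approaches the commuting locus $\{p_1 q_1 = q_1 p_1\}$ lands in $A_1$ when $p = q$ (collapsing to the diagonal) and in $A_2$ when $p \ne q$ (so necessarily $p_2 = q_1$ and $q_2 = p_1$, producing a commuting but non-equal pair). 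Then in the LES
\[\cdots \to H^{i-1}(A) \to H^i(Z, A) \to H^i(Z) \to H^i(A) \to H^{i+1}(Z, A) \to \cdots,\]
connectedness combined with $\dim H^0(A) = 2$ and $\dim H^1(Z, A) \leq 1$ forces $\dim H^1(Z, A) = 1$ and $H^0(A) \twoheadrightarrow H^1(Z, A)$ surjective; consequently $H^1(Z, A) \to H^1(Z)$ is zero and $H^1(Z)$ injects into $H^1(A) = \Q$.

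To conclude $H^1(Z) = H^2(Z) = 0$ and $H^3(Z) = \Q^2$, I would show that the restriction maps $H^i(Z) \to H^i(A)$ vanish for $i = 1, 2$ and that the connecting homomorphism $\delta \colon H^3(A) \to H^4(Z, A)$ has rank $1$. The generators of $H^1(A)$ and $H^2(A)$ both come from the second component $A_2$, explicitly from the $T/\Z_2$ (M\"obius band) and $(G/T)/\Z_2 = \C P^2$ factors. I would construct explicit bounding chains in $Z$ for their representative cycles by using the additional $\lambda$-direction in the covering, which provides $U(1)$-disks that cap off loops in $A_2$ after they are pushed off into $Z - A$; an analogous argument applied to one of the three generators of $H^3(A)$ verifies $\delta \neq 0$.

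The main obstacle is making this explicit bounding-chain construction precise. A cleaner alternative is to first show that the bound of Lemma \ref{xmasgb} is in fact an equality: the $\Z_2$-invariants of $H^*(Y \times U(1);\Q)$ under the deck transformation are everything, because the sign change from swapping $x_1 \leftrightarrow x_2$ on $H^*(Fl_3)$ and the sign change from inverting the $\C^*$-fiber cancel on each of the four natural generators of $H^*(Y)$. Thus $P_t(Z - A) = P_t(Y \times U(1))$, and combining this exact formula for $P_t(Z, A)$ with the LES and the equivariant lower bound $\dim H^*(Z) \geq \dim H^*(Z^T) = 16$ obtained from (\ref{brosnan}) and the identification $Z^T \cong T \times T_2$ pins down the Betti numbers in degrees $\leq 3$.
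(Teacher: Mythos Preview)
Your proposal takes a completely different route from the paper's proof, and unfortunately the ``cleaner alternative'' you end with does not actually close. The paper proves Lemma~\ref{xmasday} using an \emph{external} input: Yang--Mills Morse theory on the space of $U(3)$-connections. From the injection $H^*_{\gau_b}(\A)\cong H^*(U(3))\hookrightarrow H^*_{\gau_b}(\A_{flat})$ and the fact that the lowest positive Morse stratum has codimension~$4$, one gets $P_t^{\gau_b}(\A_{flat})\equiv (1+t)(1+t^3)(1+t^5)+t^3 \pmod{t^4}$, and then Proposition~\ref{us} strips off the $(1+t)$ factor. The point of Lemma~\ref{xmasday} in the paper's logic is precisely to supply information that the long exact sequence of $(Z,A)$ cannot produce on its own.

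Concretely, your alternative claims that exact knowledge of $P_t(Z,A)$, together with $P_t(A)$, connectedness of $Z$, and the lower bound $\dim H^*(Z)\geq 16$ from~(\ref{brosnan}), pins down $H^i(Z)$ for $i\leq 3$. It does not. Write $r_i$ for the rank of the connecting map $\delta\colon H^{i-1}(A)\to H^i(Z,A)$. Connectedness forces $r_1=1$, and the bound~(\ref{brosnan}) gives $\sum_i r_i\leq 4$; each $r_i\in\{0,1\}$ since $\dim H^i(Z,A)\leq 1$. The desired conclusion requires $r_2=r_3=r_4=1$, but the configuration $r_1=r_3=r_4=r_8=1$ and $r_2=r_6=r_7=r_9=0$ also satisfies every numerical constraint (total $16$, all rank bounds respected) and yields $\dim H^1(Z)=1$, contradicting the lemma. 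So the combinatorics alone are underdetermined; some independent input is required, and in the paper that input is gauge theory.

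Your primary approach---constructing explicit bounding chains to show the restriction $H^i(Z)\to H^i(A)$ vanishes for $i=1,2$---might in principle succeed, but as you acknowledge it is only a sketch. To make it a proof you would have to carry out those capping constructions in detail and also handle degree~$3$ (showing $\delta\colon H^3(A)\to H^4(Z,A)$ is nonzero), which is exactly the content the paper imports from the Morse stratification.
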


\begin{proof}
Let $\A$ denote the affine space of $U(3)$-connections on the trivial $\C^3$ bundle over $Kl$, $\A_{flat}$ the subspace of flat connections and let $\gau_b$ denote the group of gauge transformations based at one point. According to (\cite{baird2008msf} proof of Theorem 1.1) we have an injection $H_{\gau_b}^*(\A) \rightarrow H_{\gau_b}^*(\A_{flat})$ and an isomorphism $H_{\gau_b}^*(\A) \cong H^*(U(3))$, so

$$ P_t^{\gau_b}(\A_{flat}) = (1+t)(1+t^3)(1+t^5) + t^{-1}P_t^{\gau}(\A, \A_{flat}) $$

It was proven in \cite{hl2} and \cite{ho-liu-ramras} that the Yang-Mills functional determines a Morse stratification, $\A = \bigcup_{\mu = 0}^{\infty} \A_{\mu}$, where $\A_{\mu}$ is a connected submanifold with codimension $4\mu$ and orientable normal bundle and for which the open stratum $\A_0$ contains $\A_{flat}$ as a deformation retract. We deduce that $P_t^{\gau_b}(\A, \A_{flat}) = P_t^{\gau_b}(\A, \A_{0}) = t^4 \text{  (mod } t^5) $, so 

$$ P_t^{\gau_b}(\A_{flat}) = (1+t)(1+t^3)(1+t^5) + t^3 = 1+t+2t^3 \text{  (mod } t^4)$$

Finally, holonomy determines a homeomorphism $ \A_{flat}/ \gau_{0} \cong Hom_{id}(\pi_1(Kl), U(3))$, so the lemma follows by applying Lemma \ref{us}.
\end{proof}

\begin{proof}[Proof of Theorem \ref{laugh}]
We consider the long exact sequence of the pair $(Z,A)$ as the spectral sequence for the filtration, we get:
\\
\\
\sseqystep 1
\sseqxstep 1
\sseqentrysize= .8cm
\begin{sseq}{9}{2}
\ssmoveto 0 1
\ssdrop{*}
\ssmoveto 1 1
\ssdrop{*}
\ssmoveto 2 1
\ssdrop{*}
\ssmoveto 3 1
\ssdrop{*}
\ssmoveto 4 1
\ssdrop{0}
\ssmoveto 5 1
\ssdrop{*}
\ssmoveto 6 1
\ssdrop{*}
\ssmoveto 7 1
\ssdrop{*}
\ssmoveto 8 1
\ssdrop{*}
\ssmoveto 0 0
\ssdrop{2}
\ssmoveto 1 0 
\ssdrop{1}
\ssmoveto 2 0
\ssdrop{1}
\ssmoveto 3 0
\ssdrop{3}
\ssmoveto 4 0
\ssdrop{2}
\ssmoveto 5 0
\ssdrop{3}
\ssmoveto 6 0
\ssdrop{1}
\ssmoveto 7 0
\ssdrop{1}
\ssmoveto 8 0
\ssdrop{2}
\ssmoveto 9 0

\ssmoveto 0 0
\ssarrow {0} {1}
\ssmoveto 1 0
\ssarrow {0} {1}
\ssmoveto 2 0
\ssarrow {0} {1}
\ssmoveto 3 0
\ssarrow {0} {1}
\ssmoveto 7 0
\ssarrow {0} {1}
\ssmoveto 5 0
\ssarrow {0} {1}
\ssmoveto 6 0
\ssarrow {0} {1}
\ssmoveto 8 0
\ssarrow {0} {1}

\end{sseq}
\\
where each of the star entries is either 0 or 1 by Lemma \ref{xmasgb}. Lemma \ref{xmasday} implies that the first four stars are 1 and are killed by the boundary map. Inequality (\ref{brosnan}) then implies that the remaining four stars are also 1, but are not killed by the boundary map. Thus the $E_2$ page looks like
\\
\\
\sseqystep 1
\sseqentrysize= .8cm
\begin{sseq}{9}{2}
\ssmoveto 0 1
\ssdrop{0}
\ssmoveto 1 1
\ssdrop{0}
\ssmoveto 2 1
\ssdrop{0}
\ssmoveto 3 1
\ssdrop{0}
\ssmoveto 4 1
\ssdrop{0}
\ssmoveto 5 1
\ssdrop{1}
\ssmoveto 6 1
\ssdrop{1}
\ssmoveto 7 1
\ssdrop{1}
\ssmoveto 8 1
\ssdrop{1}
\ssmoveto 0 0
\ssdrop{1}
\ssmoveto 1 0 
\ssdrop{0}
\ssmoveto 2 0
\ssdrop{0}
\ssmoveto 3 0
\ssdrop{2}
\ssmoveto 4 0
\ssdrop{2}
\ssmoveto 5 0
\ssdrop{3}
\ssmoveto 6 0
\ssdrop{1}
\ssmoveto 7 0
\ssdrop{1}
\ssmoveto 8 0
\ssdrop{2}
\ssmoveto 9 0

\end{sseq}
\\
completing the proof.

\end{proof}

\begin{proof}[Proof of Proposition \ref{us}]
The identity component is identified with $$Hom_{id}(\pi_1(Kl), U(2)) \cong X := \{ (g_0,g_1) \in U(2)^2| g_0^2 = g_1^2, det(g_0)= det(g_1)\}.$$
It follows that the map $ \rho: X \rightarrow U(1)$, $\rho((g_0,g_1)) = det(g_0)$ is a fibre bundle with fibre $Z = Hom(\pi_1(Kl), SU(3))$. We deduce that $ \kappa: Z \times U(1) \rightarrow X$ sending $\kappa((g_0, g_1), \lambda) = (\lambda g_0. \lambda g_1)$ is a Galois cover. The deck transformation group $ \Z_3 = \{ \gamma \in U(1)| \gamma^3 = \id\}$ acts by $\gamma \cdot ((g_0, g_1), \lambda) = ((\gamma g_0, \gamma g_1), \gamma^2 \lambda)$. Thus $H^*(X) = H^*(Z \times U(1))^{\Z_3}$. 

To prove the proposition, we must show that $\Z_3$ acts trivially on the cohomology of $ Z \times U(1)$. The action on the $U(1)$ factor is by translation, which necessarily is cohomologically trivial. To understand the effect on $H^*(Z)$, notice that the action preserves the subspace $A \subset Z$ from (\ref{realfirst}) Thus the long exact sequence of the pair : $$ ...\rightarrow H^*(Z,A) \rightarrow H^*(Z) \rightarrow H^*(A) \rightarrow...$$ is acted on equivariantly by $\Z_3$. By Lemma \ref{xmasgb}, the cohomology groups $H^*(Z,A)$ have dimension 1 or 0 in all degrees, so $\Z_3$ must act trivially on $H^*(Z,A)$. The action on $A$ lifts to the model space $(G/T \times T \times T_2)^W$ where it acts by translation of the $T$ factor, which is an isotopy and hence trivial cohomologically. Since $\Z_3$ acts trivially on both $H^*(Z,A)$ and $H^*(A)$, we infer from the long exact sequence that it acts trivially also on $H^*(Z)$, completing the proof.
\end{proof}

\bibliographystyle{plain}

\bibliography{TomReferences}

\end{document}